\documentclass[reqno]{amsart}
\usepackage{amssymb,amsmath,amsthm,bm,mathrsfs,paralist}
\usepackage{url,xcolor,units,graphicx,mathtools,bbold,url,amsrefs}
\usepackage[T1]{fontenc}
\RequirePackage[colorlinks,citecolor=blue,urlcolor=blue]{hyperref}
\usepackage{tikz,pgfplots}
\usepackage[cal=dutchcal,calscaled=1.05,
	scr=boondoxo,scrscaled=1.05,
	]{%
mathalfa}
\newcommand{\I}{\mathcal{I}}

\newcommand{\R}{\mathbb{R}}
\newcommand{\lip}{\text{\rm Lip}}

\newcommand{\dW}{\dot{W}}

\renewcommand{\d}{\mathrm{d}}
\renewcommand{\P}{\mathrm{P}}
\newcommand{\e}{\mathrm{e}}

\newcommand{\E}{\mathrm{E}}

\DeclareMathOperator{\Cov}{\mathrm{Cov}}
\newtheorem{proposition}{Proposition}
\newtheorem{theorem}[proposition]{Theorem}
\newtheorem{lemma}[proposition]{Lemma}

\newtheorem{assumption}[proposition]{Assumption}

\theoremstyle{definition}
\newtheorem{remark}[proposition]{Remark}
\numberwithin{equation}{section}
\numberwithin{proposition}{section}
\title[SPDEs with locally Lipschitz coefficients]{On the well-posedness of 
	SPDEs with locally Lipschitz coefficients}
	\thanks{Research supported by the Leverhulme Trust Fellowship IF-2025-040, the US-NSF grants 
	DMS-1855439 and DMS-2245242,
	the Spanish MINECO grant PID2022-138268NB-100, and
	Ayudas Fundacion BBVA a Proyectos de Investigaci\'on Cient\'ifica 2021}
\author[M. Foondun]{Mohammud Foondun}
\address{University of Strathclyde}
\email{mohammud.foondun@strath.ac.uk}
\author[D. Khoshnevisan]{Davar Khoshnevisan}
\address{The University of Utah}
\email{davar@math.utah.edu}
\author[E. Nualart]{Eulalia Nualart}
\address{Universitat Pompeu Fabra}
\email{eulalia.nualart@upf.edu}
\date{September 09, 2025}
\keywords{SPDEs, space-time white noise, existence and uniqueness}
\subjclass[2010]{60H15; 60H07, 60F05}
\begin{document}
\maketitle

\begin{abstract}
	\noindent We consider the stochastic partial differential equation,
	\begin{equation*}
		\partial_t u = \tfrac12 \partial^2_x u + b(u) + \sigma(u) \dot{W},
	\end{equation*}
	where $u=u(t\,,x)$ is defined for $(t\,,x)\in(0\,,\infty)\times\R$, and $\dot{W}$ 
	denotes space-time white noise. We prove that this SPDE is well posed solely
	under the assumptions that the initial condition $u(0)$ 
	is bounded and measurable, and $b$ and $\sigma$ are locally 
	Lipschitz continuous functions and have at most linear growth with
	regularly behaved local Lipschitz constants.
	Our method is based on a truncation 
	argument together with moment bounds and tail estimates of the truncated solution.
	The novelty of our method is in the pointwise nature of the truncation argument.
\end{abstract}

\section{Introduction}

We revisit the well-posedness of the solution
$u=\{u(t\,,x)\}_{t\ge0,x\in\R}$ of the following stochastic PDE (SPDE):
\begin{equation}\label{SHE}
	\partial_t u(t\,,x) = \tfrac12 \partial^2_x u(t\,,x) + b(t\,,u(t\,,x)) + \sigma(t\,,u(t\,,x)) \dot{W}(t\,,x),
\end{equation}
where $(t\,,x)\in(0\,,\infty)\times\R$, subject to $u(0\,,x) = u_0(x)$, and the forcing $\dot{W}$ is space-time white noise;
that is, $\dot{W}$ is a generalized Gaussian random field with mean zero and
\begin{equation*}
	\Cov [ \dW(t\,,x) \,, \dW(s\,,y) ] = \delta_0(t-s) \delta_0(x-y)
	\quad\text{for all } t,s\ge 0 \text{ and } x,y\in\R.
\end{equation*}
It is well known that \eqref{SHE} is well posed when $b$ and $\sigma$ 
are Lipschitz in their spatial variable uniformly in their
time variable; see for example
Dalang \cite{Dalang} and Walsh \cite{Walsh}. We aim to extend this 
result to the setting where ``Lipschitz'' is
replaced by ``local Lipschitz with linear growth.''

The present undertaking yields an infinite-dimensional version of one of the 
foundational results of the theory of stochastic differential equations 
(SDEs) which asserts that
if $Y=\{Y_t\}_{t\ge0}$ denotes a standard Brownian
motion on $\R$ and $x_0\in\R$ is non random, then the one-dimensional 
It\^o-type SDE
$$\d X_t = b(t\,,X_t)\,\d t + \sigma(t\,,X_t)\,\d Y_t \qquad (t>0)$$
subject to $X_0=x_0$ has a unique solution provided only that 
$b(t\,,x)$ and $\sigma(t\,,x)$
are locally Lipschitz in $x$ with at-most linear growth, all valid
uniformly in $t$. The preceding follows from 
the more classical result about SDEs with Lipschitz coefficients and a stopping
time argument. See for example Exercise (2.10) of Revuz
and Yor \cite[p.\ 383]{RevuzYor}. 
It is possible to use essentially the same method in order to extend the preceding to the multidimensional
setting where $Y$ denotes a $d$-dimensional Brownian motion, $b:(0\,,\infty)\times\R^d
\to\R^d$ and $\sigma:(0\,,\infty)\times\R^d\to(\R^d)^2$. 

An infinite-dimensional extension
appears in a forthcoming monograph by Robert C. Dalang and Marta
Sanz-Sol\'e \cite{bookDS}. There,
Dalang and Sanz-Sol\'e show the well-posedness of
\eqref{SHE}, where instead $(t\,,x)\in (0\,,\infty)\times I$
in the case that $I\subset\R$ is a bounded interval (together with any of the usual boundary conditions).
A key step in their analysis is the observation that if $b$ and $\sigma$ are locally Lipschitz in their
space variable, uniformly in the time variable, then \eqref{mild_SHE} below has a predictable
random-field solution up to the stopping time
$$\tau(I) = \inf \{ t >0: \sup_{x \in  I} \vert  u(t\,,x) \vert =\infty\} \qquad[\inf\varnothing=\infty],$$
and $\P\{\tau(I) >0\}=1$ when $I\subset\R$ is bounded.
Such stopping-time arguments  are typically not used to study equations on 
$\R_+\times\R$ because one expects the resulting solution, 
if there is one, to be unbounded with probability one.
Therefore, in order to produce solutions to the SPDE \eqref{SHE} one needs to introduce  a different  argument.

The purpose of this article is to provide one such argument:
We use truncation, as has been done previously, but
replace the stopping-time argument by pointwise tail probability estimates for the truncated solution.
In this way we are able to show that, for a wide family of locally Lipschitz functions $b$ and $\sigma$
of linear growth, and with high probability, the truncated solution is
not  large, uniformly in the truncation level.  

Our method is elementary as it uses only standard SPDE estimates, 
and more significantly works without need for
any unnecessary technical assumptions. But we are quick to mention that 
other potential approaches to such problems already exist
in the literature: It
might be for example possible to adapt methods of finite-dimensional SDEs, such as 
weak existence (in the  probabilistic  sense) followed by strong uniqueness via a 
Yamada-Watanabe (see Kurtz  \cite{K14}) or Gy\"ongy-Krylov (see Gy\" ongy and  
Krylov \cite{GK96}) type arguments. In the infinite-dimensional context of 
SPDEs, weak existence for  \eqref{SHE}
was established in Shiga \cite[Theorem 2.6]{S94}  when additionally
$b(0) \geq 0$ and $\sigma(0)=0$. And  Mytnik \emph{et al} 
\cite[Theorem 1.2]{MPS06} establish the weak existence for \eqref{SHE}
when  $b \equiv 0$, $\sigma=$ H\"older continuous with linear growth, all
forced by high-dimensional, spatially correlated noise.
There is also a  method that
involves transforming the SPDE into a random integral equation which can 
in turn be compared with a deterministic PDE;
see Salins \cite{Salins}. This method does not lend itself easily to the 
case that $\sigma$ is non constant unless there are additional
contraints such as $b(0)=0$, $\sigma(0)=0$, and more stringent conditions on the initial function;
see Chen \emph{et al} \cite{CFHS} for the current state of the art of that method.

We impose the following assumptions on the initial profile $u_0$
and the coefficients $b$ and $\sigma$ in \eqref{SHE}:
\begin{assumption}\label{cond-initial}
	$u_0:\R\to\R$ is non-random, bounded, and measurable.
\end{assumption}

\begin{assumption}\label{cond-dif}
	The functions $b:(0\,,\infty)\times\R \rightarrow \R$ and 
	$\sigma:(0\,,\infty)\times\R \rightarrow \R$ are locally 
	Lipschitz continuous in their space variable with at  most linear growth,
	uniformly in their time variable.
	In other words, $0<\lip_n(b), \lip_n(\sigma) <\infty$
	and $0 \leq  L_b, L_{\sigma} <\infty,$
	for all real numbers $n>0$ where, for every space-time function $\psi$,
	\begin{equation}\label{L:lip}\begin{split}
		L_\psi &=  \sup_{t>0}\sup_{x\in\R} \frac{|\psi(t\,,x)|}{1+|x|},\
			\lip_n(\psi) =  \sup_{t>0}\sup_{\substack{x,y\in[-n,n]\\
			x\neq y}}  \frac{|\psi(t\,,x)-\psi(t\,,y)|}{|y-x|}.
	\end{split}\end{equation}
\end{assumption}

Let us recall that a random field solution to \eqref{SHE} is a 
predictable random field  $u=\{u(t\,,x)\}_{t \geq 0, x \in \R}$ that satisfies the following integral equation:
\begin{equation}\label{mild_SHE}
	u(t\,,x) = (p_t*u_0)(x) + \I_b(t\,,x) + \I_\sigma(t\,,x),
\end{equation}
where the symbol ``$*$'' denotes convolution,
\begin{equation*}
	p_r(z) = (2\pi r)^{-1/2}\exp\{-z^2/(2r)\}\qquad\text{for all $r>0$ and $z\in\R$},
\end{equation*}
and $\I_b$ and $\I_\sigma$ are the following random fields,
\begin{equation}\label{I_b}\begin{split}
	\I_b(t\,,x) &= \int_{(0,t)\times\R} p_{t-s}(y-x) b(s\,,u(s\,,y))\,\d s\,\d y,\\	
	\I_\sigma(t\,,x) &= \int_{(0,t)\times\R} p_{t-s}(y-x) \sigma(s\,,u(s\,,y))\,W(\d s\,\d y),
\end{split}\end{equation}
where the second (stochastic) integral is understood in the sense of Walsh \cite{Walsh}.

As was mentioned in the Introduction, we will use a truncation argument.
For every real number $N>0$, we define $b_N:(0\,,\infty)\times\R\to\R$ 
and $\sigma_N:(0\,,\infty)\times\R\to\R$ as follows: For all $t>0$
and $\psi:\R_+\times\R\to\R$ let
\[
	\psi_N(t\,,x) = 
	\psi(t\,,x) {\bf 1}_{\{-\e^N\le x\le \e^N\}}
	+\psi(t\,,\e^N){\bf 1}_{\{ x> \e^N\}}+
	+\psi(t\,,-\e^N){\bf 1}_{\{x<-\e^N\}}.
\]
We will need the following assumption on the Lipschitz coefficients of 
$b_N$ and $\sigma_N$. Set 
\begin{equation}\label{L_N}
	L_{N,b} = \lip_{\exp(N)}(b) \quad \text{ and }  \quad L_{N,\sigma} = \lip_{\exp(N)}(\sigma),
\end{equation}

\begin{assumption}\label{cond:lip}
	If $L_\sigma>0$ then we assume that	
	\begin{equation}\label{cond:LL:1}
		L_{N,\sigma} = \mathscr{o}(N^{3/8})
		\quad\text{and}\quad
		L_{N,b} / L_{N,\sigma}^4 = \mathcal{O}(1)\qquad
		\text{as $N\to\infty$}.
	\end{equation}
	If $\sigma$ is bounded, then we assume that
	\begin{equation}\label{cond:LL:2}
		L_{N,\sigma} = \mathscr{o}(\e^{N/2})
		\quad\text{and}\quad
		L_{N,b} / L_{N,\sigma}^4 = \mathcal{O}(1)\qquad
		\text{as $N\to\infty$}.
	\end{equation}
\end{assumption}

It has been widely believed for a long time that Theorem \ref{th:exists} 
ought to hold solely under the assumptions of locally Lipschitz coefficients
$b$ and $\sigma$ with linear growth. The following is the main result
of this paper, and
makes some partial success toward the resolution of this long-standing problem.

\begin{theorem}\label{th:exists}
	If $(b\,,\sigma)$ satisfy Assumption \ref{cond:lip}, then
	\eqref{SHE} has a unique random-field solution that satisfies the following:
	$$
		\sup_{t \in [0,T]}\sup_{x\in\R}\E\left( |u(t\,,x)|^k\right) < \infty
		\quad\text{for all $T>0$ and $k\ge1$}.
	$$
\end{theorem}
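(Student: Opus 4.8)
The plan is to run a truncation argument whose distinctive feature — as advertised in the Introduction — is that the truncated solutions are controlled \emph{pointwise in $(t\,,x)$}, not uniformly in $x$, so that no stopping time is needed. Fix $N>0$. The coefficients $b_N$ and $\sigma_N$ are globally Lipschitz in their space variable, uniformly in time, with Lipschitz constants $L_{N,b}$ and $L_{N,\sigma}$ of \eqref{L_N}, and — crucially — they inherit the linear growth of $b$ and $\sigma$ with the \emph{same} constants, $L_{b_N}\le L_b$ and $L_{\sigma_N}\le L_\sigma$, \emph{independently of $N$}. Hence the classical theory (Dalang \cite{Dalang}, Walsh \cite{Walsh}) yields a unique predictable random field $u_N$ solving the $N$-truncated version of \eqref{mild_SHE}, and a standard fractional-Gronwall estimate using only the $N$-free constants $L_b$, $L_\sigma$ gives moment bounds that do \emph{not} depend on $N$:
\[
	\sup_{t\in[0,T]}\sup_{x\in\R}\E\!\left(|u_N(t\,,x)|^{2k}\right)\le C_{T,k}\qquad\text{for all }T>0\text{ and }k\ge1,
\]
with $C_{T,k}$ of the order $\exp(c_T k^3)$ when $L_\sigma>0$ and of the smaller order $\exp(c_T k\log k)$ when $\sigma$ is bounded. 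Optimising Markov's inequality over the moment order $2k$ then gives the key \emph{pointwise tail estimate}: for some $c_T>0$ and all large $N$,
\[
	\sup_{t\in[0,T]}\sup_{x\in\R}\P\!\left\{|u_N(t\,,x)|>\e^N\right\}\le\exp\!\left(-c_T N^{3/2}\right)
\]
when $L_\sigma>0$, and the even faster (doubly exponential) bound $\exp(-c_T\e^{2N})$ when $\sigma$ is bounded, in both cases uniformly in $N$. It is the exponential truncation level $\e^N$ that converts the $(\log\cdot)^{3/2}$ one would get from a polynomial level into $N^{3/2}$.

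The heart of the argument is a comparison of $u_N$ with $u_{N+1}$. Decompose the coefficient mismatch,
$b_N(s\,,u_N)-b_{N+1}(s\,,u_{N+1})=\bigl[b_N(s\,,u_N)-b_N(s\,,u_{N+1})\bigr]+\bigl[b_N(s\,,u_{N+1})-b_{N+1}(s\,,u_{N+1})\bigr]$:
the first bracket is $\le L_{N,b}\,|u_N-u_{N+1}|$ by the global Lipschitzness of $b_N$, while the second, because $b_N$ and $b_{N+1}$ agree on $[-\e^N\,,\e^N]$, is $\le 2L_b\bigl(1+|u_{N+1}(s\,,y)|\bigr)\1_{\{|u_{N+1}(s\,,y)|>\e^N\}}$, and similarly for $\sigma$ (with $2\|\sigma\|_\infty\1_{\{\cdots\}}$ in the bounded case). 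Substituting into \eqref{mild_SHE}, taking $L^{2k}(\P)$-norms, applying Burkholder--Davis--Gundy to the stochastic integral, and using $\int_\R p_r(z)\,\d z=1$ together with $\int_0^t\|p_{t-s}\|_{L^2(\R)}^2\,\d s=\sqrt{t/\pi}$, one obtains for $D(t)=\sup_{x\in\R}\|u_N(t\,,x)-u_{N+1}(t\,,x)\|_{2k}$ a renewal-type inequality
\[
	D(t)\le S_N+L_{N,b}\int_0^t D(s)\,\d s+c_k L_{N,\sigma}\left(\int_0^t(t-s)^{-1/2}D(s)^2\,\d s\right)^{1/2},
\]
where, by Cauchy--Schwarz on the mismatch term (pulling out an $L^{4k}$-norm of $1+|u_{N+1}|$, bounded by the uniform moment estimate) followed by the tail estimate, $S_N\le C_{k,T}\exp(-c_T N^{3/2}/(4k))$ (resp.\ $C_{k,T}\exp(-c_T\e^{2N}/(4k))$). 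A weighted-norm Gronwall argument — multiply through by $\e^{-\theta t}$ and take $\theta\asymp L_{N,b}+L_{N,\sigma}^4$ so that the two integral terms each absorb at most half of $\sup_{t\le T}\e^{-\theta t}D(t)$ — then yields
\[
	\sup_{t\in[0,T]}\sup_{x\in\R}\|u_N(t\,,x)-u_{N+1}(t\,,x)\|_{2k}\le 2S_N\,\exp\!\bigl(C_{T,k}(L_{N,b}+L_{N,\sigma}^4)\bigr).
\]
Here Assumption \ref{cond:lip} enters: since $L_{N,b}=\mathcal{O}(L_{N,\sigma}^4)$ and $L_{N,\sigma}^4=\mathscr{o}(N^{3/2})$ (resp.\ $\mathscr{o}(\e^{2N})$), the exponent is $\mathscr{o}(N^{3/2})$ (resp.\ $\mathscr{o}(\e^{2N})$), which for each fixed $k$ is beaten by the decay of $S_N$; hence $\sum_N\|u_N(t\,,x)-u_{N+1}(t\,,x)\|_{2k}<\infty$.

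It follows that $\{u_N(t\,,x)\}_N$ is Cauchy in $L^{2k}(\P)$ for every $(t\,,x)$ and every $k\ge1$; call the limit $u(t\,,x)$. Letting $N\to\infty$ in the uniform moment bound gives the display asserted in the theorem. Passing to the limit in the truncated mild equation is routine: $u_N(t\,,x)\to u(t\,,x)$ in $L^{2k}$, and $b_N(s\,,u_N(s\,,y))\to b(s\,,u(s\,,y))$ (and likewise for $\sigma$) in $L^{2k}$ uniformly over $(s\,,y)\in[0,T]\times\R$, because $L_{N,b}\,\|u_N(s\,,y)-u(s\,,y)\|_{2k}\to0$ (a Lipschitz constant that is $\mathscr{o}(N^{3/2})$ against an error decaying like $\exp(-cN^{3/2})$) and $\|b_N(s\,,u(s\,,y))-b(s\,,u(s\,,y))\|_{2k}\le 2L_b\,\|1+|u(s\,,y)|\|_{4k}\,\P\{|u(s\,,y)|>\e^N\}^{1/(4k)}\to0$; hence $u$ solves \eqref{mild_SHE}, and it admits a predictable — indeed continuous — version, obtained by carrying a Kolmogorov-type space-time Hölder estimate through the same scheme. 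Uniqueness uses the very same inequality: if $u$ and $v$ are two random-field solutions, which by the argument of the first paragraph necessarily have finite moments of all orders on compacts, then on $\{|u(s\,,y)|\le\e^N,\,|v(s\,,y)|\le\e^N\}$ one has $|b(s\,,u)-b(s\,,v)|\le L_{N,b}|u-v|$ while off that event the difference is $\le L_b(2+|u|+|v|)$; running the estimate of the comparison step on $w=u-v$, with the source now built from $\P\{|u(s\,,y)|>\e^N\}+\P\{|v(s\,,y)|>\e^N\}$, gives $\sup_{t\in[0,T]}\sup_{x\in\R}\|w(t\,,x)\|_{2k}\le C_{k,T}\exp\!\bigl(C_{T,k}(L_{N,b}+L_{N,\sigma}^4)-c_T N^{3/2}/(4k)\bigr)$, whose left-hand side does not depend on $N$; letting $N\to\infty$ forces $w\equiv0$.

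I expect the main obstacle to be the calibration inside the comparison step, which is exactly what pins down the exponent $3/8$ and the fourth power in Assumption \ref{cond:lip}: one must balance the Gronwall constant $\exp(C_{T,k}(L_{N,b}+L_{N,\sigma}^4))$, whose $L_{N,\sigma}^4$ reflects the $(t-s)^{-1/2}$ singularity of the heat kernel inside the stochastic integral, against the tail decay $\exp(-c_T N^{3/2})$, whose exponent $3/2$ reflects the $k^3$ growth of the $2k$-th moment. Obtaining both bounds with constants sharp enough for that balance to hold uniformly in $N$, together with the right choice of weight $\theta$, is where the technical work concentrates.
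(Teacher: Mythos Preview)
Your proposal is correct and mirrors the paper's proof: uniform-in-$N$ moment bounds from linear growth, optimized Chebyshev to get the $\exp(-cN^{3/2})$ (resp.\ $\exp(-c\e^{2N})$) pointwise tail, comparison of consecutive truncates split on $\{|u_{N+1}|\le\e^N\}$, and a weighted-norm Gronwall with weight $\asymp L_{N,\sigma}^4$ whose blow-up is beaten by the tail precisely under Assumption~\ref{cond:lip}. The paper's execution differs only in bookkeeping --- it uses a short monotonicity lemma (Lemma~\ref{lem:sum}) to cancel the $\e^{\beta T}$ factor rather than absorb it directly into the balance, and it passes $b(s\,,u_N)\to b(s\,,u)$ via continuity and uniform integrability rather than via your $L_{N,b}\,\|u_N-u\|_{2k}\to0$ step.
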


Next we make some remarks in order to highlight the limitations and scope
of Theorem \ref{th:exists} and its proof.

\begin{remark}
	The condition $L_{N,b}/L_{N,\sigma}^4=\mathcal{O}(1)$ has new content
	only when $L_{N,\sigma}\to\infty$ as $N\to\infty$. Indeed, if $\sup_N L_{N,\sigma}<\infty$,
	then the first stated condition ensures that 
	both $b$ and $\sigma$ are globally Lipschitz continuous.
\end{remark}

\begin{remark}
	Condition \eqref{cond:LL:1} is quite restrictive as it necessarily implies that
	$\lip_n(\sigma)=\mathscr{o}(|\log n|^{3/8})$ 
	and $\lip_n(b) = \mathscr{o}(|\log n|^{3/2})$ as $n\to\infty$;
	see \eqref{L_N}.
\end{remark}

\begin{remark}[Oscillatory diffusion coefficients]
	Theorem \ref{th:exists} has stronger content when $\sigma$ is bounded for
	then \eqref{cond:LL:2} allows for a wide class of drifts $b$ when $\sigma$ is
	highly oscillatory. First, note that the first condition in \eqref{cond:LL:2}
	is equivalent to the statement that $\lip_n(\sigma)=\mathscr{o}(\sqrt n)$
	as $n\to\infty$. Now consider an oscillatory diffusion coefficient
	such as
	$\sigma(x)= \sin(1000(1+|x|)^{1/4})$ for all $x\in\R$;
	see Figure \ref{Yahoo}. Then, $\sigma$ satisfies \eqref{cond:LL:2} together with
	every drift function that satisfies $\lip_n(b) = \mathscr{o}(n)$.
	\begin{figure}[h!]\begin{tikzpicture}
		\begin{axis}[
		    axis lines = left,
		    xlabel = \(x\),
		    ylabel = {\(\sigma(x)= \sin(1000(1+x)^(1/4))\)},
		]
		\addplot [
		    domain=0:50, 
		    samples=550, 
		    color=blue,
		    line width=0.2mm,
		]
		{sin( 1000*( 1 + x)^(1/4))};
		\end{axis}
		\end{tikzpicture}\caption{An example of how Assumption \ref{cond:lip}
		is less restrictive for the drift when $\sigma$ fluctuates wildly}\label{Yahoo}\end{figure}
\end{remark}

\begin{remark}
	The method of proof of Theorem \ref{th:exists} allows for minor improvements of the first parts of 
	\eqref{cond:LL:1} and \eqref{cond:LL:2}. For example, the first condition
	in \eqref{cond:LL:1} can be improved slightly to
	$L_{N,\sigma}=\mathscr{o}(N^{2/3})$ by, instead of choosing the parameter
	$k$ as in \eqref{beta}, choosing it as $k(N)=A_1 (N/T)^{1/2} L_{N,\sigma}^{-4/3}$
	for a suitably small constant $A_1$ that is independent of $N$ and $T$.
	Then one obtains the main part of the proof -- that is \eqref{goal:1} below --
	from the fact that
	$\|u_{N+1}(t\,,x)-u_N(t\,,x)\|_k \le \|u_{N+1}(t\,,x)-u_N(t\,,x)\|_{k(N)}$ for all large $N$
	[Jensen's inequality]. We have omitted the details of such improvements as they
	appear to be primarily technical in nature.
\end{remark}

Let us end the Introduction with a brief outline of the paper. 
In \S2 we introduce the truncated solution and develop some moment  and tail estimates. 
The remaining details of the proof of Theorem \ref{th:exists} are
gathered  in \S3, and use the earlier results of the paper.

Throughout this paper, we write 
$\|X\|_p = \{ \E(|X|^p)\}^{1/p}$
for all $p\ge1$ and $X\in L^p(\Omega)$.
For every space-time function $f:(0\,,\infty)\times\R\to\R$, 
$\lip(f)$ denotes the optimal Lipschitz constant of $f$; that is,
\[
	\lip(f) =  \sup_{t>0}\sup_{a,b\in\R:a\neq b} |f(t\,,b)-f(t\,,a)|/|b-a|.
\]
In particular, $f$ is globally Lipschitz continuous in $x$, uniformly in $t$, iff $\lip(f)<\infty$.
If $f$ depends only on a spatial variable $x$, then $\lip(f)$ still makes sense provided that we extend $f$
to a space-time function as follows $f(t\,,x) = f(x)$, in the usual way.

\section{Truncation, and preliminary results}
In this section, we give more information about the truncation 
argument referred to verbally in the introduction. We begin by 
recalling the definition of $b_N$ and $\sigma_N$ and note 
that they are globally Lipschitz functions. In fact,
\[
	\adjustlimits\sup_{t>0}\sup_{x\in\R}\frac{|b_N(t\,,x)|}{1+|x|} \le L_{b}<\infty \quad \text{ and }
	\quad   \adjustlimits\sup_{t>0}\sup_{x\in\R}\frac{|\sigma_N(t\,,x)|}{1+|x|} \le L_{\sigma}<\infty,
\]
uniformly in $N>0$.
Thanks to standard theory
\cite{Dalang,Walsh}, the following SPDE has a  predictable mild solution: For $(t\,,x)\in(0\,,\infty)\times\R$,
\begin{equation}
	\partial_t u_N(t\,,x) = \tfrac12 \partial^2_x u_N(t\,,x) 
	+ b_N(t\,,u_N(t\,,x)) + \sigma_N(t\,,u_N(t\,,x)) \dot{W}(t\,,x),
	\label{SHE:N}
\end{equation}
subject to $u_N(0\,,x) = u_0(x)$. Moreover this solution is unique subject to
\[
	\sup_{t\in(0,T)}\sup_{x\in\R}\E\left( |u_N(t\,,x)|^k\right)<\infty
	\quad\text{for all $N,T>0$ and $k\ge1$}
\]
and jointly continuous in $L^k(\Omega)$.  See Dalang and Sanz-Sol\'e 
\cite[Theorems 4.2.1 and 4.2.8]{bookDS}.
As usual, \eqref{SHE:N} is short-hand for the random integral equation,
\begin{equation}\label{mild}
	u_N(t\,,x) = (p_t*u_0)(x) + \I^N_{b_N}(t\,,x)
	+ \I^N_{\sigma_N}(t\,,x),
\end{equation}
where $\I^N_{b_N}$ and $\I^N_{\sigma_N}$ are the truncated
analogous of the integrals in \textcolor{black}{\textcolor{black}{\eqref{I_b}} and are given by
\begin{equation}\label{I_b^N}\begin{split}
	\I^N_{b_N} (t\,,x) &= \int_{(0,t)\times\R} p_{t-s}(y-x)b_N(s\,,u_N(s\,,y))
		\,\d s\,\d y,\\
	\I_{\sigma_N}^N(t\,,x) &= \int_{(0,t)\times\R} p_{t-s}(y-x)\sigma_N(s\,,u_N(s\,,y))
		\,W(\d s\,\d y).
\end{split}\end{equation}}

The next result 
yields a moment estimate of the truncated solution which is similar to that in 
Theorem 6.3.2 of \cite{bookDS} for more general SPDEs.  We describe 
the details in order to provide the explicit constants, and parameter dependencies,
of the bound.
\begin{proposition}\label{pr:moments}
	If $L_{\sigma}>0$, then
	\[
		\sup_{N>0}\sup_{x\in\R}\E\left( |u_N(t\,,x)|^k\right) \le
		4^k(\|u_0\|_{L^\infty(\R)}+ 1)^k\e^{128 L_{\sigma}^4 k^3t}.
	\]
	uniformly for all $t>0$ and $k\ge \max(2\,, L_b^{1/2} L_{\sigma}^{-2})$.
\end{proposition}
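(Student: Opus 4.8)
The plan is to bound, in $L^k(\Omega)$, each of the three terms on the right of the mild equation \eqref{mild} for the truncated equation, and then to close the resulting integral inequality by a renewal-type (weighted-norm) argument. Fix $N\,,t>0$ and $x\in\R$. By the triangle inequality in $L^k(\Omega)$,
\[
	\|u_N(t\,,x)\|_k \le \|(p_t*u_0)(x)\|_k + \|\I^N_{b_N}(t\,,x)\|_k + \|\I^N_{\sigma_N}(t\,,x)\|_k .
\]
Since $p_t$ is a probability density, the first term is at most $\|u_0\|_{L^\infty(\R)}$. For the drift term I would use Minkowski's integral inequality together with the linear-growth bound $|b_N(s\,,z)|\le L_b(1+|z|)$ --- which holds uniformly in $N$, as recorded just before \eqref{SHE:N} --- and $\int_\R p_{t-s}(y-x)\,\d y=1$, obtaining $\|\I^N_{b_N}(t\,,x)\|_k \le L_b t + L_b\int_0^t \sup_{y\in\R}\|u_N(s\,,y)\|_k\,\d s$.

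For the stochastic term I would apply the Burkholder--Davis--Gundy inequality for Walsh's stochastic integral (with a constant $z_k$ of order $\sqrt k$, valid since $k\ge2$), then Minkowski's integral inequality for the $L^{k/2}(\Omega)$-norm of the quadratic variation $\int_0^t\int_\R p_{t-s}(y-x)^2\sigma_N(s\,,u_N(s\,,y))^2\,\d s\,\d y$, the bound $|\sigma_N(s\,,z)|\le L_\sigma(1+|z|)$, and the identity $\int_\R p_r(z)^2\,\d z=(4\pi r)^{-1/2}$, which together give
\[
	\|\I^N_{\sigma_N}(t\,,x)\|_k^2 \le z_k^2 L_\sigma^2 \int_0^t \frac{\bigl(1+\sup_{y\in\R}\|u_N(s\,,y)\|_k\bigr)^2}{2\sqrt{\pi(t-s)}}\,\d s .
\]

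To close the loop I would fix a parameter $\mu>0$ and a horizon $T>0$ and set $\mathcal{N}_{\mu,T}=\sup_{0<s\le T}\sup_{y\in\R}\e^{-\mu s}\|u_N(s\,,y)\|_k$, which is finite for fixed $N\,,T\,,\mu$ by the moment bound stated just after \eqref{SHE:N}. Dividing the three estimates by $\e^{\mu t}$, using $\e^{\mu s}\le\e^{\mu t}$ for $0\le s\le t$, $\sup_{t>0}t\e^{-\mu t}=1/(\e\mu)$, and $\int_0^\infty r^{-1/2}\e^{-2\mu r}\,\d r=\sqrt{\pi/(2\mu)}$, and then taking the supremum over $0<t\le T$ and $x\in\R$, produces an inequality of the form $\mathcal{N}_{\mu,T}\le \|u_0\|_{L^\infty(\R)}+\theta(\mu)+\theta(\mu)\,\mathcal{N}_{\mu,T}$ with $\theta(\mu)\le L_b/\mu + c\, z_k L_\sigma\mu^{-1/4}$ for an absolute constant $c$. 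Choosing $\mu$ equal to a suitable absolute multiple of $k^2 L_\sigma^4$ makes the $\sigma$-part of $\theta(\mu)$ at most $\tfrac14$, while the hypothesis $k\ge L_b^{1/2}L_\sigma^{-2}$, i.e.\ $L_b\le k^2 L_\sigma^4$, simultaneously forces $L_b/\mu\le\tfrac14$; hence $\theta(\mu)\le\tfrac12$ and $\mathcal{N}_{\mu,T}\le 2(\|u_0\|_{L^\infty(\R)}+1)$ for every $N$ and $T$. Letting $T\uparrow\infty$ gives $\|u_N(t\,,x)\|_k\le 2(\|u_0\|_{L^\infty(\R)}+1)\e^{\mu t}$ uniformly in $N$ and $x$, and raising to the $k$-th power turns $\e^{\mu k t}$ into $\e^{(\mathrm{const})\,k^3 L_\sigma^4 t}$.

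Every individual estimate here is standard; the only point demanding care is the bookkeeping that yields precisely the prefactor $4^k$ and the constant $128$ in the exponent --- essentially, balancing the absorbed drift term against the noise term so that no more than a constant factor is lost, and using the sharp order of the Burkholder--Davis--Gundy constant. The apparently circular use of $\mathcal{N}_{\mu,T}$ is harmless: the self-improving inequality is applied for a \emph{fixed} $N$, for which $\sup_{0<s\le T}\sup_y\|u_N(s\,,y)\|_k<\infty$ is already known, and the $\mu$ chosen above depends on neither $N$ nor $T$, so one passes to the supremum over $N$ and the limit $T\to\infty$ only at the very end.
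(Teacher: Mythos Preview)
Your proposal is correct and follows essentially the same argument as the paper: bound the three terms of the mild formulation via Minkowski and the BDG inequality with sharp constant, introduce the exponentially weighted sup-norm $\mathcal{N}$, and choose the weight $\mu$ (the paper's $\beta$) equal to a multiple of $k^2L_\sigma^4$ so that the resulting self-referential inequality closes with contraction factor at most $\tfrac34$ (your $\tfrac12$). Your use of a finite horizon $T$ before letting $T\uparrow\infty$ is in fact slightly more careful than the paper's direct use of the infinite-time norm, since it makes the a~priori finiteness of $\mathcal{N}_{\mu,T}$ transparent.
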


\begin{proof}
	Choose and fix $N,t>0$ and $x\in\R$. Owing to \eqref{mild}, we may write
	\begin{equation}\label{u_N=I_1+I_2}
		\| u_N(t\,,x)\|_k \le \|u_0\|_{L^\infty(\R)} + I_1 + I_2,
	\end{equation}
	where $I_1 = \| \I^N_{b_N}(t\,,x) \|_k$ and 
	$I_2 = \| \I^N_{\sigma_N}(t\,,x)\|_k.$
	We begin by estimating $I_1$ as follows.
	Thanks to \eqref{L:lip} and the Minkowski inequality
	and the fact that $$\|b_N(s\,,u_N(s\,,y))\|_k\le L_b
	(1+ \| u_N(s\,,y) \|_k ),$$
	we can see that
	\begin{align*}
		I_1 &\le\int_0^t\d s\int_{-\infty}^\infty\d y\
			p_{t-s}(y-x) \left\| b_N(s\,,u_N(s\,,y)) \right\|_k\\
		&\le L_b \left[t +  \int_0^t \sup_{y\in\R} \| u_N(s\,,y)\|_k\,\d s\right].
	\end{align*}
	For every space-time random field $Z=\{Z(t\,,x);\, t\ge0,x\in\R\}$ and for 
	all real numbers $k\ge1$ and $\beta>0$ define
	\begin{equation}\label{N}
		\mathcal{N}_{k,\beta}(Z) = \sup_{t\ge0}\sup_{x\in\R} \e^{-\beta t}\| Z(t\,,x)\|_k.
	\end{equation}
	It follows that
	\[
		I_1 \le L_b \left[t + \mathcal{N}_{k,\beta}(u_N)\int_0^t\e^{\beta s}\,\d s\right]
		\le L_b \left[t + \beta^{-1}\e^{\beta t} \mathcal{N}_{k,\beta}(u_N)\right].
	\]
	Since $t\exp(-\beta t)\le  (\e\beta)^{-1}<\beta^{-1}$, this leads to the following
	bound for $I_1$:
	\begin{equation}\label{I1}
		I_1 \le L_b \beta^{-1} \e^{\beta t}
		\left[ 1 + \mathcal{N}_{k,\beta}(u_N)\right].
	\end{equation}
	We bound $I_2$ using the asymptotically optimal form of the Burkholder-Davis-Gundy 
	inequality (see \cite{minicourse}) as follows:
	\begin{equation}\label{I2^2}
		I_2^2 \le 4k\int_0^t\d s\int_{-\infty}^\infty\d y\
		\left[p_{t-s}(y-x)\right]^2 \left\| \sigma_N(s\,,u_N(s\,,y))\right\|_k^2.
	\end{equation}
	Thanks to \eqref{L:lip} and the fact that 
	$\| \sigma_N(s\,,u_N(s\,,y))\|_k^2\le 2L_\sigma^2(1+\|u_N(s\,,y)\|_k^2)$,
	\[
		I_2^2  \le 8kL_{\sigma}^2\int_0^t\d s\int_{-\infty}^\infty\d y\
			\left[p_{t-s}(y-x)\right]^2 \left(1 + \| u_N(s\,,y)\|_k^2\right).
	\]
	Basic properties of the heat kernel imply that
	\begin{equation}\label{L2:p}
		\|p_r\|_{L^2(\R)}^2 = (p_r*p_r)(0) = p_{2r}(0)=\tfrac12(\pi r)^{-1/2}
		\qquad\text{for every $r>0$}.
	\end{equation}
	Therefore,
	\begin{align*}
		I_2^2 &\le \frac{4k L_{\sigma}^2}{\sqrt\pi}\int_0^t\frac{\d s}{\sqrt{t-s}} + 
			\frac{4k L_{\sigma}^2}{\sqrt{\pi}}\int_0^t\sup_{y\in\R}\|u_N(s\,,y)\|_k^2\,
			\frac{\d s}{\sqrt{t-s}}\\
		&\le \frac{4k L_{\sigma}^2}{\sqrt\pi}\int_0^t\frac{\d s}{\sqrt{s}} + 
			\frac{4k L_{\sigma}^2\left[ \mathcal{N}_{k,\beta}(u_N)\right]^2\e^{2\beta t}}{\sqrt{\pi}}\int_0^t
			\frac{\e^{-2\beta (t-s)}}{\sqrt{t-s}}\,\d s.
	\end{align*}
	Since $\int_0^t s^{-1/2}\,\d s\le 
	\exp(2\beta t)\int_0^\infty s^{-1/2}\exp(-2\beta s)\,\d s = \sqrt{\pi/(2\beta)}\exp(2\beta t),$
	we are led to the following:
	\[
		I_2^2 \le 4k\e^{2\beta t}L_{\sigma}^2
		\left( 1 +  \mathcal{N}_{k,\beta}(u_N)^2\right)/\sqrt{2\beta}.
	\]
	We prefer to simplify the preceding slightly more, using the inequality $\sqrt{l^2+n^2}\le |l|+|n|$
	-- valid for all $l,n\in\R$ -- as follows:
	\begin{equation}\label{I2}
		I_2 \le 2\sqrt{k}(2\beta)^{-1/4}\e^{\beta t}L_{\sigma}
		\left( 1 +  \mathcal{N}_{k,\beta}(u_N) \right).
	\end{equation}
	We can combine \eqref{I1} and \eqref{I2} to find that
	\begin{align*}
		&\| u_N(t\,,x)\|_k\\
		&\le \|u_0\|_{L^\infty(\R)}
			+ L_b\beta^{-1} \e^{\beta t} \left[ 1 + \mathcal{N}_{k,\beta}(u_N)\right]
			+ 2\sqrt{k}(2\beta)^{-1/4} \e^{\beta t} L_{\sigma}
			\left( 1 +  \mathcal{N}_{k,\beta}(u_N) \right)\\
		&\le \|u_0\|_{L^\infty(\R)}
			+ \e^{\beta t}\left( L_b \beta^{-1} +  2\sqrt{k} L_{\sigma}(2\beta)^{-1/4}\right)
			\left(1 +  \mathcal{N}_{k,\beta}(u_N)\right).
	\end{align*}
	This implies that
	\[
		\mathcal{N}_{k,\beta}(u_N) \le \|u_0\|_{L^\infty(\R)}
		+\left(L_b\beta^{-1} + 2\sqrt{k}L_{\sigma}(2\beta)^{-1/4}\right)  
		\left(1 +  \mathcal{N}_{k,\beta}(u_N)\right).
	\]
	Set $\beta = 128 k^2L_{\sigma}^4$ to see that
	$\beta^{-1} L_b + (2\beta)^{-1/4}2\sqrt{k}L_\sigma\le \frac34$
	for this particular choice of $\beta$.
	Solve for $\mathcal{N}_{k,128 k^2 L_{\sigma}^4}(u_N)$ in order to find that
	$\mathcal{N}_{k,128 k^2L_{\sigma}^4}(u_N)\le4(\|u_0\|_{L^\infty(\R)} + 1).$
	This is another way to state the proposition.
\end{proof}

\begin{remark}
	The requirement that $k\geq \max(2, L_b^{1/2}L_{\sigma}^{-2})$ is 
	a technical consequence of the proof. 
	We can always choose, without incurring loss of generality,   
	$L_\sigma$ large enough to ensure that $L_b^{1/2}L_{\sigma}^{-2}\le2$,
	in order to ensure that the result stated in the proposition holds for all $k\geq 2$.
\end{remark}

\textcolor{black}{The conclusion of Proposition \ref{pr:moments} can be 
improved when $\sigma$ is constant. The following
takes care of that case, and at the same time improves Proposition 
\ref{pr:moments} when $\sigma$
is bounded but $\text{Lip}(\sigma)>0$.}

\begin{proposition}\label{pr:moments:bdd}
	If $\sigma:(0\,,\infty)\times\R\to\R$ is  bounded, then
	\[
		\adjustlimits
		\sup_{N>0}\sup_{x\in\R}\E\left( |u_N(t\,,x)|^k\right) \le
		4^k\e^{2L_bkt}\left( \|u_0\|_{L^\infty(\R)} + 
		\|\sigma\|_{L^\infty(\R_+\times\R)}t^{1/4}
		+ 1\right)^k k^{k/2},
	\]
	uniformly for all $t>0$ and $k\ge2$.
\end{proposition}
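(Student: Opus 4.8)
The plan is to run the same skeleton as the proof of Proposition~\ref{pr:moments}, but to exploit the boundedness of $\sigma$ so as to dispense with the weighted-norm device $\mathcal{N}_{k,\beta}$ on the stochastic term, and then to close the estimate on the drift term by a plain Gronwall argument. As in \eqref{u_N=I_1+I_2} write $\|u_N(t\,,x)\|_k\le\|u_0\|_{L^\infty(\R)}+I_1+I_2$, where $I_1=\|\I^N_{b_N}(t\,,x)\|_k$ and $I_2=\|\I^N_{\sigma_N}(t\,,x)\|_k$.

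For the stochastic term the key observation is that the truncation only clamps the spatial argument, so $|\sigma_N(s\,,z)|\le\|\sigma\|_{L^\infty(\R_+\times\R)}$ for every $s,z$, uniformly in $N$; hence $\|\sigma_N(s\,,u_N(s\,,y))\|_k\le\|\sigma\|_{L^\infty(\R_+\times\R)}$. Feeding this into the Burkholder--Davis--Gundy bound \eqref{I2^2} and using the heat-kernel identity \eqref{L2:p} gives
\[
	I_2^2\le 4k\,\|\sigma\|_{L^\infty(\R_+\times\R)}^2\int_0^t\|p_{t-s}\|_{L^2(\R)}^2\,\d s
	=4k\,\|\sigma\|_{L^\infty(\R_+\times\R)}^2\sqrt{t/\pi},
\]
so that $I_2\le 2\sqrt k\,\|\sigma\|_{L^\infty(\R_+\times\R)}\,t^{1/4}$ (using $\pi^{-1/4}<1$). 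Crucially, there is no longer any dependence on $\sup_y\|u_N(s\,,y)\|_k$ in this term.

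For the drift term, exactly as in the proof of Proposition~\ref{pr:moments}, the linear-growth bound $\|b_N(s\,,u_N(s\,,y))\|_k\le L_b(1+\|u_N(s\,,y)\|_k)$ together with $\int_\R p_{t-s}(y-x)\,\d y=1$ yields $I_1\le L_b t+L_b\int_0^t\sup_{y\in\R}\|u_N(s\,,y)\|_k\,\d s$. Set $f_N(t)=\sup_{x\in\R}\|u_N(t\,,x)\|_k$, which is finite and bounded on every $[0\,,T]$ by the well-posedness statement quoted from \cite{bookDS} just after \eqref{SHE:N}. Combining the three bounds and taking the supremum over $x$ gives $f_N(t)\le g(t)+L_b\int_0^t f_N(s)\,\d s$ with the nondecreasing function $g(t)=\|u_0\|_{L^\infty(\R)}+L_b t+2\sqrt k\,\|\sigma\|_{L^\infty(\R_+\times\R)}t^{1/4}$, so Gronwall's inequality yields $f_N(t)\le g(t)\e^{L_b t}$.

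It then remains to massage $g(t)\e^{L_b t}$ into the stated shape: using $L_b t\le\e^{L_b t}$ one has $g(t)\e^{L_b t}\le\e^{2L_b t}\bigl(\|u_0\|_{L^\infty(\R)}+2\sqrt k\,\|\sigma\|_{L^\infty(\R_+\times\R)}t^{1/4}+1\bigr)$, and since $\sqrt k\ge1$ the parenthetical term is at most $2\sqrt k\bigl(\|u_0\|_{L^\infty(\R)}+\|\sigma\|_{L^\infty(\R_+\times\R)}t^{1/4}+1\bigr)$. Raising to the $k$-th power and taking suprema over $x\in\R$ and $N>0$ (every constant above is independent of both) produces the claimed bound $4^k\e^{2L_bkt}k^{k/2}(\|u_0\|_{L^\infty(\R)}+\|\sigma\|_{L^\infty(\R_+\times\R)}t^{1/4}+1)^k$. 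There is no real obstacle here; the only points needing care are the a priori finiteness of $f_N$ on $[0\,,T]$ (so that Gronwall is legitimate) and the uniformity of all constants in $N$. If one prefers to avoid Gronwall, the weighted-norm argument of Proposition~\ref{pr:moments} with $\beta$ taken proportional to $L_b$ works just as well, at the cost of replacing $t^{1/4}$ by $L_b^{-1/4}$ in the final bound.
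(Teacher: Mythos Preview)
Your proof is correct and follows essentially the same route as the paper: identical treatment of the stochastic term $I_2$ via BDG and \eqref{L2:p}, and the same linear-growth bound on the drift. The only difference is that you close the drift estimate with Gronwall's inequality on $f_N(t)=\sup_x\|u_N(t,x)\|_k$, whereas the paper uses the weighted norm $\mathcal{N}_{k,\beta}$ with $\beta=2L_b$ and solves for it; these are interchangeable devices here (and your final constant $2^k$ is in fact slightly sharper than the paper's $4^k$).
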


\begin{proof}
	We modify the proof of Proposition \ref{pr:moments} by first observing that
	\eqref{u_N=I_1+I_2} remains valid, and so does \eqref{I1}. We start with \eqref{I2^2}
	and estimate $I_2$, using \eqref{L2:p}, simply as follows:
	\begin{align*}
		I_2^2 &
			\le 4k\|\sigma\|_{L^\infty(\R_+\times\R)}^2\int_0^t\|p_r\|_{L^2(\R)}^2\,\d r =
			 2k\pi^{-1/2}\|\sigma\|_{L^\infty(\R_+\times\R)}^2\int_0^t \d r/\sqrt r\\
		&=4k\|\sigma\|_{L^\infty(\R_+\times\R)}^2\sqrt{t/\pi} \leq  
			4k\|\sigma\|_{L^\infty(\R_+\times\R)}^2\sqrt t.
	\end{align*}
	This yields
	\[
		\|u_N(t\,,x)\|_k \le \|u_0\|_{L^\infty(\R)} + 
		 L_b\beta^{-1}\exp(\beta t)\left[ 1+\mathcal{N}_{k,\beta}(u_N)\right]
		+ 2\sqrt{k}\|\sigma\|_{L^\infty(\R_+\times\R)}t^{1/4}.
	\]
	Since the right-hand side does not depend on $(t\,,x)$, we divide by $\exp(\beta t)$
	and optimize over $(t\,,x)$ to find that
	\[
		\mathcal{N}_{k,\beta}(u_N)\le \|u_0\|_{L^\infty(\R)} + 
		2\sqrt{k}\|\sigma\|_{L^\infty(\R_+\times\R)}t^{1/4}
		+ L_b\beta^{-1}\left[ 1+\mathcal{N}_{k,\beta}(u_N)\right],
	\]
	uniformly for all real numbers $k\ge2$, $N,\beta>0$. Set $\beta=2L_b$ 
	and solve for $\mathcal{N}_{k,2L_b}(u_N)$ to find that
	\begin{align*}
		\mathcal{N}_{k,2L_b}(u_N) &\le 2\|u_0\|_{L^\infty(\R)} 
			+ 4\sqrt{k}\|\sigma\|_{L^\infty(\R_+\times\R)}t^{1/4}
			+ 1 \\
		& \le 4\left( \|u_0\|_{L^\infty(\R)} +  \|\sigma\|_{L^\infty(\R_+\times\R)}
			t^{1/4} + 1\right) \sqrt k,
	\end{align*}
	which is another way to state the result.
\end{proof}

The next results shows some tail estimates of the  truncated solution.
\begin{proposition}\label{pr:prob:tail}
	If $L_{\sigma}>0$, then
	\begin{equation}\label{eq:tail:1}
		\P\left\{ |u_{N+1}(t\,,x)| \ge \e^N\right\}
		\le \exp\left( - \frac{N^{3/2}}{64 L_{\sigma}^2 \sqrt{t}}\right),
	\end{equation}
	uniformly for all $t>0$,  $x\in\R$, and
	\[
		N\ge  4 \log ( 4(\|u_0\|_{L^\infty(\R)} +1))\vee
		256 t\max\left(4L_{\sigma}^4, L_b\right).
	\]
	If $\sigma\in L^\infty(\R_+\times\R)$, then
	\begin{equation}\label{eq:tail:2}
		\P\left\{ |u_{N+1}(t\,,x)| \ge \e^N \right\}
		\le \exp\left( - \frac{\e^{2N-4L_bt}}{32\e\left(\|u_0\|_{L^\infty(\R)} 
		+ \|\sigma\|_{L^\infty(\R_+\times\R)}t^{1/4}+1\right)^2}\right),
	\end{equation}
	uniformly for all $t>0$,  $x\in\R$, and
	\[
		N\ge \tfrac12\log32 + 2L_bt+ \tfrac12 + \log\left( \|u_0\|_{L^\infty(\R)} + 
		\|\sigma\|_{L^\infty(\R_+\times\R)}t^{1/4}+1\right).
	\]
\end{proposition}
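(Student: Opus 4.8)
\emph{Overview.} The plan is to deduce both tail bounds from the moment estimates of Propositions \ref{pr:moments} and \ref{pr:moments:bdd} via a Chebyshev-type inequality, optimizing over the moment order as a function of $N$ and $t$. Concretely, for every real $k\ge1$ Markov's inequality gives
\[
	\P\left\{ |u_{N+1}(t\,,x)| \ge \e^N\right\} \le \e^{-kN}\,\E\left( |u_{N+1}(t\,,x)|^k\right),
\]
and into the right-hand side one inserts the relevant moment bound applied at truncation level $N+1$ (this is legitimate because the suprema in those propositions run over all positive truncation levels), takes logarithms, and then chooses $k=k(N\,,t)$ so as to make the resulting expression as negative as possible.

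\emph{The case $L_\sigma>0$.} By Proposition \ref{pr:moments} the logarithm of the bound above is at most $-kN + k\log(4(\|u_0\|_{L^\infty(\R)}+1)) + 128 L_\sigma^4 k^3 t$, valid as soon as $k\ge\max(2\,,L_b^{1/2}L_\sigma^{-2})$. The hypothesis $N\ge 4\log(4(\|u_0\|_{L^\infty(\R)}+1))$ is exactly what is needed to absorb the middle term into $\tfrac14 kN$, leaving $-\tfrac34 kN + 128 L_\sigma^4 k^3 t$. This is minimized for $k$ of order $\sqrt N/(L_\sigma^2\sqrt t)$, and I expect the clean choice $k=\sqrt N/(16 L_\sigma^2\sqrt t)$ to make the cubic term equal $\tfrac{1}{32}\,N^{3/2}L_\sigma^{-2}t^{-1/2}$ and $\tfrac34 kN$ equal $\tfrac{3}{64}\,N^{3/2}L_\sigma^{-2}t^{-1/2}$, producing the exponent $-N^{3/2}/(64 L_\sigma^2\sqrt t)$ of \eqref{eq:tail:1}. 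It then remains to check that the admissibility constraints $k\ge2$ and $k\ge L_b^{1/2}L_\sigma^{-2}$ translate into $N\ge1024\,L_\sigma^4 t$ and $N\ge256\,L_b t$ respectively, both of which are subsumed by the stated lower bound $N\ge256\,t\max(4L_\sigma^4\,,L_b)$.

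\emph{The case $\sigma$ bounded.} Write $M=\|u_0\|_{L^\infty(\R)}+\|\sigma\|_{L^\infty(\R_+\times\R)}t^{1/4}+1$. Proposition \ref{pr:moments:bdd} makes the logarithm of the Markov bound at most $k\log(4M)+(2L_bt-N)k+\tfrac12 k\log k$ for $k\ge2$. Because of the $\tfrac12 k\log k$ term the optimal $k$ is now \emph{exponentially} large in $N$, and the natural choice is $k=\e^{2N-4L_bt}/(16\e M^2)$. With this $k$ one has $\tfrac12\log k = N-2L_bt-\log M-\tfrac12\log(16\e)$, so the factor multiplying $k$ collapses to $\log4-\tfrac12\log(16\e)=-\tfrac12$, whence $\log\P\le-\tfrac12 k=-\,\e^{2N-4L_bt}/(32\e M^2)$, which is \eqref{eq:tail:2}; and the constraint $k\ge2$ is precisely the stated lower bound on $N$.

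\emph{Main difficulty.} I do not foresee a deep obstacle: the moment estimates of Propositions \ref{pr:moments} and \ref{pr:moments:bdd} do all the analytic work, and what remains is a one-parameter optimization of a Chebyshev bound. The only points demanding care are the bookkeeping of numerical constants, so that the lower bounds on $N$ come out exactly as stated, and the slightly unusual feature that in the bounded-$\sigma$ regime the moment order must be taken exponentially, rather than polynomially, large in $N$.
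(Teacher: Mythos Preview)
Your proposal is correct and essentially identical to the paper's own argument: both parts proceed by Chebyshev plus the respective moment bound, and your choices $k=\sqrt N/(16L_\sigma^2\sqrt t)$ and $k=\e^{2N-4L_bt}/(16\e M^2)$ coincide exactly with the paper's (written there as $k=\sqrt{AN}$ with $A=(256L_\sigma^4 t)^{-1}$, and $k=C^{-2}\e^{2N-1}$ with $C=4\e^{2L_bt}M$). The only cosmetic difference is that you absorb the $\log C$ term into $\tfrac14 kN$ before optimizing, whereas the paper carries it along and uses $N\ge4\log C$ at the end; the arithmetic and constants match throughout.
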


\begin{proof}
	Let us first consider the case that $L_{\sigma}>0$.
	Proposition \ref{pr:moments} and Chebyshev's inequality together ensure that
	\begin{equation*} \begin{split}
		\P\left\{ |u_{N+1}(t\,,x)| \ge \e^N\right\} 
		&\le\e^{-kN}\E\left( |u_{N+1}(t\,,x)|^k\right) \\
		&\le \left( 4(\|u_0\|_{L^\infty(\R)} +1)\right)^k\e^{-kN+128L_{\sigma}^4 k^3t},
		\end{split}
	\end{equation*}
	uniformly for all real numbers $N,t>0$, $x\in\R$, and 
	all real  $k\ge \max(2\,, L_b^{1/2}L_{\sigma}^{-2})$.
	Set $k=\sqrt{A N}$ -- where the value of $A>0$ is to be determined --
	and $C=4(\|u_0\|_{L^\infty(\R)} +1)$ in order to see that
	\[
		\P\left\{ |u_{N+1}(t\,,x)| \ge \e^N\right\} 
		\le  \exp\left( -\left\{ \sqrt{A} - 128 L_{\sigma}^4 A^{3/2}t
		- \frac{\sqrt{A}\log C}{N}\right\}N^{3/2}\right).
	\]
	We use this with $A = (256L_\sigma^4 t)^{-1}$,
	and observe that  $k\ge\max(2\,,\sqrt{L_b}L_{\sigma}^{-2})$ iff
	$N\ge 256 t\max(4L_{\sigma}^4, L_b)$.
	It follows that
	\begin{align*}
		\P\left\{ |u_{N+1}(t\,,x)| \ge \e^N\right\}
			&\le\exp\left( -\left\{\frac{ 128 L_{\sigma}^4 t}{[256L_{\sigma}^4t]^{3/2}}
			- \frac{\log C}{N\sqrt{256 L_{\sigma}^4t}}\right\}N^{3/2}\right)\\
		&= \exp\left( -\left\{\frac12
			- \frac{\log C}{N}\right\}
			\frac{N^{3/2}}{(256 t)^{1/2} L_{\sigma}^2}\right),
	\end{align*}
	which has the desired outcome, when $L_{\sigma}>0$, provided additionally that $N\ge 4\log C$.
	The case $\sigma\in L^\infty(\R)$ is proved similarly but rests on Proposition \ref{pr:moments:bdd}
	instead of \ref{pr:moments}, viz.,
	\[
		\P\left\{ |u_{N+1}(t\,,x)| \ge \e^N\right\} 
		\le\e^{-kN}\E\left( |u_{N+1}(t\,,x)|^k\right) 
		\le C^k\e^{-kN} k^{k/2},
	\]
	valid uniformly for all $N,t>0$, $x\in\R$, $k\ge2$,
	where
	\[
		C =C(t\,,u_0\,,L_b) = 4\e^{2L_bt}\left( \|u_0\|_{L^\infty(\R)} + \|\sigma\|_{L^\infty(\R_+\times\R)}t^{1/4}+1\right).
	\]
	We apply the preceding with the particular choice $k= C^{-2} \exp\{2N-1\}$ and
	compute a bit in order to finish.
\end{proof}

Finally, the following real-variable lemma will be helpful to us.

\begin{lemma}\label{lem:sum}
	Consider a function $f:(0\,,\infty)\to(0\,,\infty)$ and
	an increasing function $g:(0\,,\infty)\to(0\,,\infty)$. If
	there exists $\beta,T_0>0$ such that
	\[
		\sup_{t\in(0,T]}\left[ \e^{-\beta t}f(t)\right] \le \e^{-\beta T} g(T)
		\qquad\forall T\in(0\,,T_0),
	\]
	then $\sup_{(0,T]}f\le g(T)$ for every $T\in(0\,,T_0)$.
\end{lemma}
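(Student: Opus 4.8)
The plan is to reduce the desired bound to the pointwise inequality $f(t)\le g(t)$ and then to exploit the monotonicity of $g$. First I would fix $T\in(0\,,T_0)$ and an arbitrary $t\in(0\,,T]$; the goal is to show $f(t)\le g(T)$ and then take the supremum over such $t$.

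Since $0<t\le T<T_0$, the point $t$ itself lies in the admissible range $(0\,,T_0)$, so I may apply the hypothesis with $t$ in place of $T$. This gives
\[
	\e^{-\beta t}f(t)\le\sup_{s\in(0,t]}\left[\e^{-\beta s}f(s)\right]\le\e^{-\beta t}g(t),
\]
and cancelling the strictly positive factor $\e^{-\beta t}$ leaves $f(t)\le g(t)$. Because $g$ is increasing and $t\le T$, this upgrades to $f(t)\le g(t)\le g(T)$. Taking the supremum over all $t\in(0\,,T]$ yields $\sup_{(0,T]}f\le g(T)$, which is the assertion.

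I do not expect any genuine obstacle in this argument; the only steps needing a moment's attention are the bookkeeping observation that the auxiliary parameter $t$ stays inside $(0\,,T_0)$ (guaranteed by $t\le T<T_0$), and the remark that we never require the supremum $\sup_{(0,T]}f$ to be attained, since each value $f(t)$ is controlled individually.
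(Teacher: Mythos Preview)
Your argument is correct and is essentially the same as the paper's proof: both observe that the hypothesis, applied at the endpoint $T$ (or at any $t\in(0\,,T_0)$), yields $\e^{-\beta T}f(T)\le\e^{-\beta T}g(T)$, hence $f(T)\le g(T)$, and then invoke the monotonicity of $g$ to bound $\sup_{(0,T]}f$. The paper states this in one line; your version just makes the bookkeeping explicit.
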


\begin{proof}
	Since $\e^{-\beta T}f(T)\le \e^{-\beta T}
	g(T)$, we cancel the exponentials to deduce the result from the
	monotonicity of $g$.
\end{proof}

\section{Proof of Theorem \ref{th:exists}}

\begin{proof}[Proof of existence]
	We first prove the theorem in the case that $L_\sigma>0$. Until we
	mention to the contrary, we therefore assume tacitly that $L_\sigma>0$.
	Thus, we also assume that \eqref{cond:LL:1} holds.
	
	Let us choose and fix an arbitrary number $T>0$.
	Our primary goal is to prove that
	\begin{equation}\label{goal:1}
		\sum_{N=1}^\infty \adjustlimits\sup_{t\in[0,T]}\sup_{x\in\R}
		\|u_{N+1}(t\,,x)-u_N(t\,,x)\|_k <\infty\qquad\forall k\ge1.
	\end{equation}
	Because $T>0$ can be as large as needed,
	\eqref{goal:1} implies that the random variable
	$u(t\,,x)=\lim_{N\to\infty} u_N(t\,,x)$ exists in $L^2(\Omega)$, say,
	pointwise in $(t\,,x)$.
	From there it is not difficult to prove that the thus-defined
	random field $u$ is a mild solution to \eqref{SHE}.
	
	With the preceding paragraph in mind,
	we now adjust \eqref{N} slightly and define the following local-in-time norms:
	\begin{equation}\label{N'}
		\mathcal{N}_{k,\beta,T}(Z) = \adjustlimits
		\sup_{t\in(0,T]}\sup_{x\in\R} \e^{-\beta t}\| Z(t\,,x)\|_k,
	\end{equation}
	defined for every $\beta, T>0$, $k\ge 1$, and all space-time random fields $Z$.

	Recall \eqref{L_N} and note that $N\mapsto \lip_N(b)$ and $N\mapsto \lip_N(\sigma)$
	are nondecreasing and $b$ and $\sigma$ are globally Lipschitz respectively when
	$\lim_{N\to\infty} \lip_N(b)<\infty$ and $\lim_{N\to\infty}\lip_N(\sigma)<\infty$. 
	Therefore, Condition \eqref{cond:LL:1} ensures that we can assume without any loss in generality
	that 
	\begin{equation}\label{L:N:sigma}
		\lim_{N\to\infty} L_{N,\sigma} =\infty,
	\end{equation}
	for $b$ and $\sigma$ will be globally Lipschitz otherwise, in which case there is nothing to prove.
	From now on, condition \eqref{L:N:sigma} is assumed to hold.
	
	Thanks to \eqref{mild} we can write, for every $k\ge1$ [not necessarily an integer],
	$t>0$, and $x\in\R$,
	\begin{equation}\label{u-u=I1I2}
		\left\| u_{N+1}(t\,,x) - u_N(t\,,x)  \right\|_k \le I_1 + I_2,
	\end{equation}
	where
	\begin{align*}
		I_1 & =\int_{(0,t)\times\R} p_{t-s}(y-x)\left\| b_{N+1}(s\,,u_{N+1}(s\,,y))
			- b_N(s\,,u_N(s\,,y))\right\|_k\d s\,\d y,\\
		I_2 & =\left\|  \int_{(0,t)\times\R} p_{t-s}(y-x)
			\left[\sigma_{N+1}(s\,,u_{N+1}(s\,,y))
			-\sigma_N(s\,,u_N(s\,,y))\right]W(\d s\,\d y)\right\|_k.
	\end{align*}
	Recall \eqref{N} and for every $N,s>0$ and $y\in\R$ consider the event
	\begin{equation*}
		G_{N+1}(s\,,y) = \left\{ \omega\in\Omega:\, |u_{N+1}(s\,,y)|(\omega) \le \e^N\right\}.
	\end{equation*}
	On one hand, 
	\begin{align*}
		&\left\| \left[ b_{N+1}(s\,,u_{N+1}(s\,,y))
			- b_N(s\,,u_N(s\,,y))\right] \mathbf{1}_{G_{N+1}(s\,,y)}\right\|_k\\
		&\hskip1in \le\left\| b_N(s\,,u_{N+1}(s\,,y))
			- b_N(s\,,u_N(s\,,y))\right\|_k\\
		&\hskip1in\le L_{N,b}\left\| u_{N+1}(s\,,y) - u_N(s\,,y) \right\|_k
			\le L_{N,b}\e^{\beta s} \mathcal{N}_{k,\beta,T}( u_{N+1}-u_N),
	\end{align*}
	for all $k\ge1$, $\beta,N>0$, $s\in(0\,,T]$, and $y\in\R$.
	On the other hand, 
	\begin{align*}
		&\left\| \left[ b_{N+1}(s\,,u_{N+1}(s\,,y))
			- b_N(s\,,u_N(s\,,y))\right] \mathbf{1}_{\Omega\setminus 
			G_{N+1}(s\,,y)}\right\|_k\\
		&\le \left\| b_{N+1}(s\,,u_{N+1}(s\,,y))
			\mathbf{1}_{\Omega\setminus G_{N+1}(s\,,y)}\right\|_k
			+ \left\| b_N(s\,,u_N(s\,,y))\mathbf{1}_{\Omega\setminus 
			G_{N+1}(s\,,y)}\right\|_k\\
		&\le \left[\left\| b_{N+1}(s\,,u_{N+1}(s\,,y))\right\|_{2k}
			+ \left\| b_N(s\,,u_N(s\,,y))\right\|_{2k}\right]
			\left[ 1-\P\left( G_{N+1}(s\,,y)\right)\right]^{1/(2k)}.
	\end{align*}
	We have used the following variation of the Cauchy--Schwarz inequality in the last line: 
	$\| X\mathbf{1}_F\|_k \le  \|X\|_{2k}[ \P(F)]^{1/(2k)}$
	for all $X\in L^k(\Omega)$ and all events $F\subset\Omega$.
	Set $C=4(\|u_0\|_{L^\infty(\R)} + 1).$
	Thanks to Proposition \ref{pr:moments},
	\begin{align*}
		&\left\| b_{N+1}(s\,,u_{N+1}(s\,,y))\right\|_{2k}
			+ \left\| b_N(s\,,u_N(s\,,y))\right\|_{2k}\\
		&\hskip1in\le L_b\left[ 
			\|u_{N+1}(s\,,y)\|_{2k} + \|u_N(s\,,y)\|_{2k}\right]
			\le 2C L_b\e^{512 L_{\sigma}^4 k^2s},
	\end{align*}
	uniformly for all $N,s>0$, $y\in\R$, and
	$k\ge\max(1\,, \frac12 L_b^{1/2}L_{\sigma}^{-2})$.
	This yields the following inequality:
	\begin{align*}
		&\left\| \left[ b_{N+1}(s\,,u_{N+1}(s\,,y))
			- b_N(s\,,u_N(s\,,y))\right] \mathbf{1}_{\Omega\setminus 
			G_{N+1}(s\,,y)}\right\|_k\\
		&\hskip1in \le 2C L_b\e^{512 L_{\sigma}^4 k^2s}
			\left[ \P\left\{ |u_{N+1}(s\,,y)| \ge \e^N\right\}\right]^{1/(2k)},
	\end{align*}
	valid uniformly for all $N,s>0$, $y\in\R$, and $k\geq 1$.
	Therefore, Proposition \ref{pr:prob:tail}
	yields
	\begin{align*}
		&\left\| \left[ b_{N+1}(s\,,u_{N+1}(s\,,y))
			- b_N(s\,,u_N(s\,,y))\right] \mathbf{1}_{\Omega\setminus 
			G_{N+1}(s\,,y)}\right\|_k\\
		&\hskip2.5in \le 2 L_b C\e^{512 L_{\sigma}^4 k^2s}
			\exp\left( - \frac{N^{3/2}}{128 k L_{\sigma}^2\sqrt s}\right),
	\end{align*}
	valid uniformly for all $s\in(0\,,T]$, $y\in\R$, 
	$N\ge 4 \log C\vee 256 T\max(4L_{\sigma}^4, L_b)$, 
	and $k\geq 1$. Thus, we find using \eqref{N'} that
	\begin{align}\nonumber
		I_1&\le L_{N,b}\e^{\beta t} \mathcal{N}_{k,\beta,T}( u_{N+1}-u_N)\int_{(0,t)\times\R} 
			\e^{-\beta (t-s)}p_{t-s}(y-x)
			\,\d s\,\d y\\
		&\quad +  2L_bC\int_{(0,t)\times\R}\e^{512 L_{\sigma}^4 k^2s}
			\exp\left( - \frac{N^{3/2}}{128 k L_{\sigma}^2\sqrt s}\right)
			p_{t-s}(y-x)\,\d s\,\d y\label{I11}\\\nonumber
		&\le \frac{L_{N,b}\e^{\beta t}}{\beta}\, \mathcal{N}_{k,\beta,T}( u_{N+1}-u_N)
			+ \frac{L_b C}{128 L_{\sigma}^4 k^2}
			\,\exp\left( 512 L_{\sigma}^4 k^2t
			- \frac{N^{3/2}}{128 k L_{\sigma}^2\sqrt t}\right),
	\end{align}
	uniformly for all $t\in(0\,,T)$ and $x\in\R$,
	provided that $N,k\ge c$, for a complicated looking but otherwise 
	unimportant number
	\begin{equation}\label{c}
		c=c(\|u_0\|_{L^\infty(\R)}\,,L_{\sigma}\,,T\,,L_b)>1.
	\end{equation}
	For later purposes, we pause to mention that the number $c$, while
	fixed, can be chosen to be as large as we wish. Owing to Condition \eqref{cond:LL:1},
	we therefore select $c=c(\|u_0\|_{L^\infty(\R)}\,,L_{\sigma}\,,T\,,L_b)$ large
	enough to additionally ensure that 
	\begin{equation}\label{c:N0}
		c > \sup_{N\ge N_0}\frac{\sqrt{L_{N,b}}}{L_{N,\sigma}^2},
		\quad\text{where}\quad
		N_0=\inf\left\{N>1:\ L_{N,\sigma}\ge1\right\}.
	\end{equation}
	The number $N_0$ is well defined and finite thanks to \eqref{L:N:sigma}.
	
	Next we study the quantity $I_2$.
	Let us appeal to the Burkholder-Davis-Gundy  inequality (see \cite{minicourse}) to find that
	\begin{align*}
		I_2^2 &\le 4k
			\int_0^t\d s\int_{-\infty}^\infty\d y\
			[p_{t-s}(y-x)]^2\left\| \sigma_{N+1}(s\,,u_{N+1}(s\,,y)) - 
			\sigma_N(s\,,u_N(s\,,y)) \right\|_k^2.
	\end{align*}
	As in the above truncation, we may write
	\begin{align*}
		&\left\| \left[ \sigma_{N+1}(s\,,u_{N+1}(s\,,y))
			- \sigma_N(s\,,u_N(s\,,y))\right] \mathbf{1}_{G_{N+1}(s\,,y)}\right\|_k\\
		&\le L_{N,\sigma}\e^{\beta s} \mathcal{N}_{k,\beta,T}( u_{N+1}-u_N)
			\qquad\forall \beta,N>0,\ s\in(0\,,T],\ y\in\R.
	\end{align*}
	Moreover, 
	\begin{align*}
		&\left\| \left[ \sigma_{N+1}(s\,,u_{N+1}(s\,,y))
			- \sigma_N(s\,,u_N(s\,,y))\right] \mathbf{1}_{\Omega\setminus G_{N+1}(s\,,y)}\right\|_k\\
		&\hskip.1in\le \left[\left\| \sigma_{N+1}(s\,,u_{N+1}(s\,,y))\right\|_{2k}
			+ \left\| \sigma_N(s\,,u_N(s\,,y))\right\|_{2k}\right]
			\left[ 1-\P\left( G_{N+1}(s\,,y)\right)\right]^{1/(2k)}.
	\end{align*}
	
	Thanks to Proposition \ref{pr:moments},
	\begin{align*}
		&\left\| \sigma_{N+1}(s\,,u_{N+1}(s\,,y))\right\|_{2k}
			+ \left\| \sigma_N(s\,,u_N(s\,,y))\right\|_{2k}\\
		&\hskip1in\le L_{\sigma}\left[ 
			\|u_{N+1}(s\,,y)\|_{2k} + \|u_N(s\,,y)\|_{2k}\right]
			\le 2L_{\sigma} C\e^{512 L_{\sigma}^4 k^2s},
	\end{align*}
	uniformly for all $N,s>0$, $y\in\R$, and $k\geq 1$.
	Therefore, Proposition \ref{pr:prob:tail} yields
	\begin{align*}
		&\left\| \left[ \sigma_{N+1}(s\,,u_{N+1}(s\,,y))
			- \sigma_N(s\,,u_N(s\,,y))\right] \mathbf{1}_{\Omega\setminus 
			G_{N+1}(s\,,y)}\right\|_k\\
		&\hskip1.7in\le 2L_{\sigma} C\e^{512 L_{\sigma}^4 k^2s}
			\exp\left( - \frac{N^{3/2}}{128 k L_{\sigma}^2\sqrt s}\right),
	\end{align*}
	uniformly for all $s\in(0\,,T]$, $y\in\R$, 
	$k\geq 1$, and 
	\begin{equation}\label{c_T}
		N\ge c_T := 4 \log C\vee 256 T\max\left(4L_{\sigma}^4, L_b\right).
	\end{equation}
	Thus, we find  that
	\begin{align*}
		&I_2^2 \le 8k L_{N,\sigma}^2 \e^{2\beta t}
			\left[ \mathcal{N}_{k,\beta,T}(u_{N+1}-u_N)\right]^2
			\int_0^t\d s\int_{-\infty}^\infty\d y\
			\e^{-2\beta(t-s)}[p_{t-s}(y-x)]^2 \\
		&\quad +  32 k L_b^2C^2\int_{(0,t)\times\R}\e^{1024 L_{\sigma}^4 k^2s}
			\exp\left( - \frac{N^{3/2}}{128 k L_{\sigma}^2\sqrt s}\right)
			[p_{t-s}(y-x)]^2\,\d s\,\d y\\
		&\leq \frac{4 kL_{N,\sigma}^2 \e^{2\beta t}}{\sqrt{\beta}}
			\left[ \mathcal{N}_{k,\beta,T}(u_{N+1}-u_N)\right]^2
			+ 128 k L_{\sigma}^2C^2 \e^{1024 L_{\sigma}^4 k^2t}
			\exp\left( - \frac{N^{3/2}}{128 k L_{\sigma}^2\sqrt t}
			\right) \sqrt{t},
	\end{align*}
	for every choice of $\beta>0$, $t\in(0\,,T)$, and $x\in\R$,
	provided that $N\ge \max(c\,,c_T)$ and $k\ge c$ -- in case it helps we recall that $c>1$ 
	and $c_T>0$ were defined respectively in \eqref{c} and \eqref{c_T}. 
	Combine this last inequality with \eqref{I11} and \eqref{u-u=I1I2} in order to see that
	\begin{align*}
		&\| u_{N+1}(t\,,x) - u_N(t\,,x) \|_k
			\le \e^{\beta t}\left[ L_{N,b}\beta^{-1} + 
			2\sqrt{k} L_{N,\sigma}\beta^{-1/4}
			\right] \mathcal{N}_{k,\beta,T}( u_{N+1}-u_N)\\
		&\hskip1in + C\left[ \frac{L_b}{128 L_{\sigma}^4 k^2} +
			\sqrt{128 k}t^{1/4} L_{\sigma} \right]
			\exp\left( 512 L_{\sigma}^4 k^2t
			- \frac{N^{3/2}}{128 k L_{\sigma}^2\sqrt t}\right),
	\end{align*}
	as long as $N\ge\max(c\,,c_T)$ and $k\ge c$. 
	
	Next, we make particular choices of $k\ge c$ and $\beta>0$ as follows:
	\begin{equation}\label{beta}
		k = c
		\quad\text{and}\quad
		\beta = 16A_0^4 k^2 L_{N,\sigma}^4,
	\end{equation}
	where 
	\begin{equation}\label{A_0}
		A_0 = \max\left( \sqrt{8}L_\sigma^4\,,4\right).
	\end{equation}
	We pause to emphasize that $\beta$ depends on $(N\,,T)$.
	In this way we find that
	\begin{align*}
		&\sup_{x\in\R}\| u_{N+1}(t\,,x) - u_N(t\,,x) \|_c
			\le \e^{\beta t}\left[ \frac{L_{N,b}}{16A_0^4c^2 L_{N,\sigma}^4} + 
			\frac{1}{A_0}
			\right] \mathcal{N}_{c,\beta,T}( u_{N+1}-u_N)\\
		&\hskip1in + C\left[ \frac{L_b}{128 L_{\sigma}^4 c^2} +
			\sqrt{128 c}t^{1/4} L_{\sigma} \right]
			\exp\left( 512 L_{\sigma}^4 c^2t
			- \frac{N^{3/2}}{128 c L_{\sigma}^2\sqrt t}\right),
	\end{align*}
	uniformly for all $T>0$, $t\in(0\,,T]$, and $N\ge \max(N_0\,,c\,,c_T)$;
	see \eqref{c:N0}. Because $A_0\ge 4$ -- see \eqref{A_0} --
	an application of \eqref{c:N0} yields the following inequality,
	\begin{align*}
		&\sup_{x\in\R}\| u_{N+1}(t\,,x) - u_N(t\,,x) \|_c
			\le \e^{\beta t}\left[ \frac{1}{16A_0^4} + 
			\frac{1}{A_0}
			\right] \mathcal{N}_{c,\beta,T}( u_{N+1}-u_N)\\
		&\hskip1in + C\left[ \frac{L_b}{128 L_{\sigma}^4 c^2} +
			\sqrt{128 c}t^{1/4} L_{\sigma} \right]
			\exp\left( 512 L_{\sigma}^4 c^2t
			- \frac{N^{3/2}}{128 c L_{\sigma}^2\sqrt t}\right)\\
		&\le \e^{\beta t}\left[ \frac{1}{4096} + 
			\frac{1}{4}
			\right] \mathcal{N}_{c,\beta,T}( u_{N+1}-u_N)\\
		&\hskip1in + C\left[ \frac{L_b}{128 L_{\sigma}^4 c^2} +
			\sqrt{128 c}t^{1/4} L_{\sigma} \right]
			\exp\left( 512 L_{\sigma}^4 c^2t
			- \frac{N^{3/2}}{128 c L_{\sigma}^2\sqrt t}\right),
	\end{align*}
	valid uniformly for all $T>0$, $t\in(0\,,T]$, and $N\ge \max(N_0\,,c\,,c_T)$.  
	Since $\frac{1}{4096} + 
	\frac{1}{4}<\frac12$, we may divide both sides of the preceding
	by $\exp(\beta t)$ and optimize over $t\in(0\,,T]$ in order to find that
	\begin{align}
		&\mathcal{N}_{c,\beta,T}( u_{N+1}-u_N)\label{N(u)<}\\\nonumber
		&\le 2C\left[ \frac{L_b}{128 L_{\sigma}^4 c^2} +
			\sqrt{128 c}T^{1/4} L_{\sigma} \right]
			\sup_{t\in(0,T]}
			\exp\left( -\left( \beta - 512 L_{\sigma}^4c^2 \right)t
			- \frac{N^{3/2}}{128 c L_{\sigma}^2\sqrt t}\right),
	\end{align}
	uniformly for all $T>0$, and $N\ge \max(N_0\,,c\,,c_T)$.  Thanks to 
	\eqref{c:N0}, \eqref{beta},
	and \eqref{A_0},
	\[
		\beta - 512 L_{\sigma}^4c^2 =
		c^2\left( 16A_0^4 L_{N,\sigma}^4 - 512 L_\sigma^4\right)
		\ge c^2\left( 16A_0^4  - 512 L_\sigma^4\right)>0,
	\]
	uniformly for all $N\ge N_0$.
	Now, let us consider the following function that appears in the exponent
	on the right-hand side of \eqref{N(u)<}:
	\[
		\psi(t) = \left( \beta - 512 L_{\sigma}^4 c^2 \right)t
		+ \frac{N^{3/2}}{128 k L_{\sigma}^2\sqrt t}
		\qquad(t>0).
	\]
	Because 
	\[
		\psi'(t)\le \beta - \frac{(N/T)^{3/2}}{256 c L_\sigma^2}
		= 16A_0^4 L_{N,\sigma}^4 c^2
		- \frac{(N/T)^{3/2}}{256 c L_\sigma^2}
		\quad\forall 0<t\le T,
	\]
	it follows from this that $\psi'<0$ everywhere on $(0\,,T]$ provided that
	\begin{equation}\label{N:Large}
		\frac{N}{L_{N,\sigma}^{8/3}} > 4096^{2/3} A_0^{8/3} c^{2}
		L_\sigma^{4/3} T =: N_T;
	\end{equation}
	the left-hand side being well defined for example when $N_T>N_0$.
	Condition \eqref{cond:LL:1} ensures that the left-hand side tends to
	infinity as $N\to\infty$. Therefore, \eqref{N:Large} holds for every $N>\max(N_0\,,N_T)$.
	It follows that
	\[
		\inf_{t\in(0,T]}\psi(t)=\psi(T)\quad\forall N>\max(N_0\,,N_T),\ T>0,
	\]
	whence it follows from \eqref{N(u)<} that,
	for every $T_0>0$ and $N\ge \max(N_0\,,N_{T_0}\,,c_0\,,c_{T_0})$,
	\begin{align*}
		&\mathcal{N}_{c,\beta,T}( u_{N+1}-u_N)\\
		&\le 2C\left[ \frac{L_b}{128 L_{\sigma}^4 c^2} +
			\sqrt{128 c}T^{1/4} L_{\sigma} \right]
			\exp\left( -\left( \beta - 512 L_{\sigma}^4 c^2 \right)T
			- \frac{N^{3/2}}{128 c L_{\sigma}^2\sqrt T}\right),
	\end{align*}
	uniformly for every $T\in(0\,,T_0)$. In order to ensure the uniformity statement of
	$T$, we have also used the fact that $T\mapsto c_T$ is increasing; see
	\eqref{c_T}.
	We now apply Lemma \ref{lem:sum} in order to deduce from the above
	and \eqref{N'} that, for every $T>0$ fixed,
	\begin{equation}\label{case1}
		\limsup_{N\to\infty} N^{-3/2}
		\log\adjustlimits\sup_{t\in(0,T]}\sup_{x\in\R}
		\|u_{N+1}(t\,,x)-u_N(t)\|_k
		<0
		\qquad\forall k\ge1.
	\end{equation}
	When $k\in[1\,,c]$ this follows from the preceding.
	For general $k$ it follows from a  relabeling $[k\leftrightarrow c$],
	and an appeal to the fact that $c$ can be as large as we wish -- see the comments that follow
	\eqref{c}. This proves our original goal \eqref{goal:1}.
	The remainder of the argument is technically simpler.
	
	The preceding proves that
	\begin{equation}\label{u}
		u(t\,,x) = \lim_{N\to\infty} u_N(t\,,x)
		\quad\text{exists in $L^k(\Omega)$}
		\qquad\forall k\ge 1,
	\end{equation}
	and the rate of convergence does not depend on $t\in(0\,,T)$ nor on $x\in\R$.
	As a direct consequence $u$ is $L^k(\Omega)$-continuous. This ensures that $u$ has a predictable version.
	Therefore it remains to prove that, for every $t\in(0\,,T)$ and $x\in\R$,
	\begin{equation}
		\lim_{N\to\infty} \I^N_{b_N}(t\,,x) = \I_b(t\,,x)
		\quad\text{and}\quad
		\lim_{N\to\infty}\I^N_{\sigma_N}(t\,,x) = \I_\sigma(t\,,x),\label{double}
	\end{equation}
	where both of the limits hold in $L^2(\Omega)$,
	and the random fields $\I_b$, $\I_\sigma$, $\I^N_b$, and $\I^N_\sigma$
	were \textcolor{black}{defined in 
	\eqref{I_b} and \eqref{I_b^N}}. Thanks to \eqref{u}
	and \eqref{mild}, this proves that $u$ is a mild solution to \eqref{SHE}.
	
	Observe that \begin{equation*} \begin{split} |b_N(t\,,z)-b(t\,,z)| &\le
	\mathbf{1}_{\R\setminus[-\exp(N),\exp(N)]}(z) ( |b(t\,,z)| + |b_N(t\,,z)| )\\
	&\le 2L_b(1+|z|)\mathbf{1}_{\R\setminus[-\exp(N),\exp(N)]}(z),
	\end{split}
	\end{equation*}
	 for all $z\in\R$
	and $t,N>0$.
	This yields
	\begin{align*}
		&
			\left\| \I^N_{b_N}(t\,,x) - \int_{(0,t)\times\R} p_{t-s}(y-x)b(s\,,u_N(s\,,y))
			\,\d s\,\d y \right\|_2\\
		&
			\le\int_{(0,t)\times\R} p_{t-s}(y-x)\left\| b_N(s\,,u_N(s\,,y)) - b(s\,,u_N(s\,,y))\right\|_2
			\,\d s\,\d y\\
		&
			\le2L_b\int_{(0,t)\times\R} p_{t-s}(y-x)\left( \sqrt{\E\left( 1+|u_N(s\,,y)|^2;
			|u_N(s\,,y)|>\e^N\right)}\right)
			\,\d s\,\d y.
	\end{align*}
	If $X\ge0$ is a random variable and $A>0$ is a constant, then
	we apply the Cauchy-Schwarz inequality and Chebyshev's inequality
	back to back in order to find that
	$$\E(X^2;X>A)\le \sqrt{\E(X^4)\P\{|X|>A\}}
	\le A^{-2}\E(X^4).$$
	Therefore, Propositions \ref{pr:moments} and \ref{pr:moments:bdd} assure us that
	\[
		\lim_{N\to\infty}\sup_{s\in(0,T)}\sup_{y\in\R}
		\E\left( 1+|u_N(s\,,y)|^2;
		|u_N(s\,,y)|>\e^N\right)=0,
	\]
	and hence
	$\I^N_{b_N}(t\,,x) - \int_{(0,t)\times\R} p_{t-s}(y-x)b(s\,,u_N(s\,,y))
	\,\d s\,\d y\to 0$ as $N\to\infty$,
	where the convergence takes place in $L^2(\Omega)$. Therefore, the first assertion of
	\eqref{double} would follow once we can show that
	\begin{equation}\label{double1:goal}
		\lim_{N\to\infty}\int_{(0,t)\times\R} p_{t-s}(y-x)b(s\,,u_N(s\,,y))
		\,\d s\,\d y =\I_b(t\,,x),
	\end{equation}
	where the convergence takes place in $L^2(\Omega)$. 
	Since $b$ has at-most linear growth and
	$\|b(s\,,u_N(s\,,y)) - b(s\,,u(s\,,y))\|_2 \le
	\|b(s\,,u_N(s\,,y))\|_2 + \| b(s\,,u(s\,,y))\|_2,$
	Propositions \ref{pr:moments} and \ref{pr:moments:bdd}
	ensure that  $\|b(s\,,u_N(s\,,y)) - b(s\,,u(s\,,y))\|_2$ is bounded uniformly in $s\in(0\,,T)$,
	$N>0$, and $y\in\R$. Thanks to \eqref{u}, uniform integrability, and
	the continuity of $b$, 
	\[
		\lim_{N\to\infty}b(s\,,u_N(s\,,y))=b(s\,,u(s\,,y))
		\quad\text{in $L^2(\Omega)$, for every $s>0$ and $y\in\R^d$. }
	\] 
	Therefore, the dominated convergence theorem yields
	\[
		\lim_{N\to\infty}\int_{(0,t)\times\R} p_{t-s}(y-x) \left\| b(s\,,u_N(s\,,y)) - b(s\,,u(s\,,y))\right\|_2
		\,\d s\,\d y=0,
	\]
	for all $t\in(0\,,T)$ and $x\in\R$.
	The triangle inequality now yields \eqref{double1:goal} and hence the first assertion 
	of \eqref{double}. 
		Similarly, $$|\sigma_N(t\,,z)-\sigma(t\,,z)| 
	\le 2L_{\sigma}(1+|z|)\mathbf{1}_{\R\setminus[-\exp(N),\exp(N)]}(z)$$ for all $z\in\R$
	and $t,N>0$.
	Then, by the $L^2(\Omega)$-isometry
	of stochastic integrals,
	\begin{align*}
		&\left\| \I^N_{\sigma_N}(t\,,x) - \int_{(0,t)\times\R} p_{t-s}(y-x)\sigma(s\,,u_N(s\,,y))
			\,W(\d s\,\d y) \right\|_2^2\\
		&\le\int_{(0,t)\times\R} [p_{t-s}(y-x)]^2 \left\| \sigma_N(s\,,u_N(s\,,y)) 
			- \sigma(s\,,u_N(s\,,y))\right\|_2^2
			\,\d s\,\d y\\
		&\le4L_{\sigma}^2\int_{(0,t)\times\R} [p_{t-s}(y-x)]^2 \E\left( 1+|u_N(s\,,y)|^2;
			|u_N(s\,,y)|>\e^N\right)
			\,\d s\,\d y.
	\end{align*}
	As before, Propositions \ref{pr:moments} and \ref{pr:moments:bdd} assure us that
	\[
		\lim_{N\to\infty}\sup_{s\in(0,T)}\sup_{y\in\R}
		\E\left( 1+|u_N(s\,,y)|^2;
		|u_N(s\,,y)|>\e^N\right)=0,
	\]
	and hence $\I^N_{\sigma_N}(t\,,x) - \int_{(0,t)\times\R} p_{t-s}(y-x)\sigma(s\,,u_N(s\,,y))
	\,W(\d s\,\d y)\to 0$ in $L^2(\Omega)$ as $N\to\infty$. Therefore, we are left to show that
	\[
		\lim_{N\to\infty}\int_{(0,t)\times\R} p_{t-s}(y-x)\sigma(s\,,u_N(s\,,y))
		\,W(\d s\,\d y) = \I_\sigma(t\,,x),
	\]
	where the convergence takes place in $L^2(\Omega)$. 
	Since $\sigma$ has at-most linear growth and 
	\[
		\|\sigma(s\,,u_N(s\,,y)) - \sigma(s\,,u(s\,,y))\|_2 \le
		\|\sigma(s\,,u_N(s\,,y))\|_2 + \| \sigma(s\,,u(s\,,y))\|_2,
	\]
	Propositions \ref{pr:moments} and \ref{pr:moments:bdd}
	ensure that  $\|\sigma(s\,,u_N(s\,,y)) - \sigma(s\,,u(s\,,y))\|_2$ is bounded uniformly in $s\in(0\,,T)$,
	$N>0$, and $y\in\R$. Thanks to \eqref{u}, uniform integrability, and
	the continuity of $\sigma$, 
		$\lim_{N\to\infty}\sigma(s\,,u_N(s\,,y))=\sigma(s\,,u(s\,,y))$
		in $L^2(\Omega)$, for every $s>0$ and $y\in\R^d$.  
	Therefore, 
	\begin{align*}
		&\left\| \int_{(0,t)\times\R} p_{t-s}(y-x)\sigma(s\,,u_N(s\,,y))
			\,W(\d s\,\d y) - \I_\sigma(t\,,x) \right\|_2^2\\
		&\le\int_{(0,t)\times\R} [p_{t-s}(y-x)]^2 \left\| \sigma(s\,,u_N(s\,,y)) -
			\sigma(s\,,u(s\,,y))\right\|_2^2
			\,\d s\,\d y,
	\end{align*}
	which converges to 0 as $N\rightarrow \infty$ by dominated convergence, 
	for all $t\in(0\,,T]$ and $x\in\R$.
	The triangle inequality now yields \eqref{double1:goal}. This
	verifies the second assertion of \eqref{double} and completes 
	the proof of Theorem \ref{th:exists}
	in the case that $L_\sigma>0$. It remains to study the same problem when
	$\sigma$ is bounded.

	The proof in the case that $\sigma$ is bounded is structurally
	similar to the derivation of the first part, except
	we replace the tail bound \eqref{eq:tail:1} with \eqref{eq:tail:2}
	in order to obtain the following variant of \eqref{case1}:
	\[
		\limsup_{N\to\infty} \e^{-2N}
		\log\adjustlimits\sup_{t\in(0,T]}\sup_{x\in\R}
		\|u_{N+1}(t\,,x)-u_N(t)\|_k
		<0
		\qquad\forall k\ge1.
	\]
	The remainder of our
	derivation of the $\sigma$-bounded case is the same
	as in the first portion of the proof [$L_\sigma>0$] and therefore omitted. This completes
	the proof.
\end{proof}

\begin{proof}[Proof of uniqueness]
	We only give an outline of the 
	proof in the case that $L_\sigma>0$, since the proof of uniqueness
	is essentially a simplied modification of the proof of existence, and because the other case where
	$\sigma$ is bounded is similar.
	
	Let $u,v$ two solutions to (\ref{SHE}) with same initial
	condition $u_0$ satisfying Assumption \ref{cond-initial}. Recall that
	$u$ and $v$ satisfy the mild formulation (\ref{mild_SHE})
	with $\sigma$ and $b$ satisfying Assumption \ref{cond-dif}. 
	Then, using Burkholder-Davis-Gundy inequality (see \cite{minicourse}), 
	we find that
	for all $t>0$, $x \in \R$, and $k \geq 1$,
	$$\Vert u(t\,,x)-v(t\,,x)\Vert_k \leq I_1+I_2+I_3+I_4,$$
	where
	\begin{align*}
		I_1&=\int_{(0,t)\times\R} p_{t-s}(y-x)\left\| (b(s\,,u(s\,,y))
			- b(s\,,v(s\,,y))){\bf 1}_{A_N(s, y)}\right\|_k \d s\,\d y,\\
		I_2^2&=4k \int_0^t\d s\int_{-\infty}^\infty\d y\
			[p_{t-s}(y-x)]^2\left\| (\sigma(s\,,u(s\,,y))
			- \sigma(s\,,v(s\,,y))){\bf 1}_{A_N(s, y)}\right\|_k^2,\\
		I_3&=\int_{(0,t)\times\R} p_{t-s}(y-x)\left\| (b(s\,,u(s\,,y))
			- b(s\,,v(s\,,y))){\bf 1}_{\Omega \setminus A_N(s, y)}\right\|_k\d s\,\d y, \\
		I_4^2&=8k \int_0^t\d s\int_{-\infty}^\infty\d y\
			[p_{t-s}(y-x)]^2\left\| (\sigma(s\,,u(s\,,y))
			- \sigma(s\,,v(s\,,y))){\bf 1}_{\Omega \setminus A_N(s, y)}\right\|_k^2,
	\end{align*}
	where for $N,s>0$ and $y \in \R$,
	$$
		A_N(s\,, y)=\left\{ \omega\in\Omega:\, |u(s\,,y)|(\omega) \le
		\e^N,\ |v(s\,,y)|(\omega) \le \e^N\right\}.
	$$
	
	Observe that the moment bounds obtained in Propositions
	\ref{pr:moments} and \ref{pr:moments:bdd} only use the linear
	growth constants $L_{\sigma}$ and $L_b$. Therefore, they
	also hold when $u_N$ is replaced by $u$ or $v$.
	The same is true for the tail estimates obtained Proposition
	\ref{pr:prob:tail}, which also hold replacing $u_{N+1}$ by $u$ or $v$.
	Therefore, we can proceed as in the proof of existence 
	but appeal to the local Lipschitz condition on $b$ and $\sigma$; that is,
	we recall \eqref{L_N} and write
	$$
		\left\| \left[ b(s\,,u(s\,,y))
		- b(s\,,v(s\,,y))\right] \mathbf{1}_{A_N(s,y)}\right\|_k
		\le L_{N,b}\left\| u(s\,,y) - v(s\,,y) \right\|_k,
	$$
	and  proceed similarly for $\sigma$. Because 
	$\P(\Omega\setminus A_N(s\,, y))$
	is at most $\P \{|u(t\,, y)|\geq \e^N \}+\P \{|v(t\,, y)|\geq \e^N \},$
	similar computations to those in the proof of existence imply that for all $T>0$,
	\begin{align*}
		\| u(t\,,x) - v(t\,,x) \|_k
			&\le \e^{\beta t}\left[ L_{N,b} \beta^{-1} + 
			2\sqrt{k} L_{N,\sigma} \beta^{-1/4}
			\right] \mathcal{N}_{k,\beta,T}( u-v)\\
		&+ C\left[ \frac{L_b}{128 L_{\sigma}^4 k^2}
			+\sqrt{128 k} T^{1/4} L_{\sigma} 
			\right] \exp\left( 512 L_{\sigma}^4 k^2t
			- \frac{N^{3/2}}{128 k L_{\sigma}^2\sqrt t}\right),
	\end{align*}
	uniformly for all $\beta>0$, $t \in (0,T)$,  $x \in \R$, and $N, k \geq c$, 
	where $C=4(\|u_0\|_{L^\infty(\R)} +1)$
	and $c=c(\|u_0\|_{L^\infty(\R)}\,,L_{\sigma}\,,T\,,L_b)>0$. Now we fix
	the parameters $\beta$ and $k$ as in \eqref{beta} and analyze the preceding
	exactly as was done in the proof of
	existence in order to conclude that
	\[
		\sum_{N=1}^\infty\adjustlimits\sup_{t\in(0,T]}\sup_{x\in\R}
		\|u(t\,,x)-v(t\,,x)\|_2<\infty\qquad\forall T>0.
	\]
	Since the summand does not depend on $N$, it must be zero.
	This concludes the proof.
\end{proof}

\subsection*{Acknowledgements}
We thanks Professors Raluca Balan and Tusheng Zhang for their encouraging remarks
and for pointing out a serious error in an earlier draft. The correction of that error
led to Assumption \ref{cond:lip}.

\bibliography{Foon-Nual}

\end{document}